\documentclass[11pt,lenq]{amsart}
\usepackage{amsmath}
\usepackage{amscd}
\usepackage{amssymb}
\usepackage{amsbsy}
\usepackage{amsfonts}
\usepackage{color}
\usepackage{latexsym}
\usepackage{graphics}
\usepackage{graphicx}
\usepackage{amsmath,amscd,latexsym}
\usepackage{array}
\usepackage{enumerate}
\pagestyle{plain}

\setlength{\textheight}{19cm}
\setlength{\textwidth}{5.5in}
\hoffset -0.50  true cm
\voffset 1.00  true cm

\theoremstyle{plain}
\newtheorem{theorem}{Theorem}[section]

\newtheorem{lemma}[theorem]{Lemma}

\newtheorem{main theorem}[theorem]{Main Theorem}

\newtheorem{question}[theorem]{Question}

\theoremstyle{definition}   

\newtheorem{definition}[theorem]{Definition}

\newlength\savewidth

\newcommand{\BS}{\mathrm{BS}}
\newcommand{\Conj}{\mathrm{Conj}}

\newcommand{\svert}{\,|\,}

\makeatletter
\makeatother

\hyphenpenalty=9000

\begin{document}
	\title[Connections between conjugation quandles and their underlying groups via residual finiteness and the Hopf property]
	{Connections between conjugation quandles and their underlying groups via residual finiteness and the Hopf property}
	\author{Mohamed Elhamdadi}
	\address{Mathematics and Statistics College of Arts and Sciences, University of South Florida, Tampa, FL 33620, USA}
	\email{emohamed@usf.edu}	
	\author{Jan Kim}
	\address{Institute of Mathematical Sciences, Ewha Womans University,
	52, Ewhayeodae-gil, Seodaemun-gu, Seoul, 03760 Republic of Korea}
	\email{jankim@ewha.ac.kr}		
	
	\subjclass[2020]{Primary 57K12, 20E26}
	
	
	\begin{abstract}
        We prove that if a conjugation quandle is Hopfian, then its underlying group is also Hopfian.
        We also show that the converse does not hold by providing an example. This highlights a distinction between conjugation quandles and their underlying groups. While a recent result shows that every hyperbolic group is Hopfian, conjugation quandles of hyperbolic groups can still be non-Hopfian.
        Furthermore, we examine conjugation quandles of Baumslag–Solitar groups. We show that these quandles are infinitely generated. Hence, to apply the result that every finitely generated residually finite quandle is Hopfian, it is necessary to work with finitely generated quandles. For this purpose, we employ Dehn quandles as subquandles, which allow us to fully characterize the residual finiteness of conjugation quandles of the Baumslag–Solitar groups.
	\end{abstract}
	
	\maketitle
	
	\section{Introduction}
	\label{sec:introduction}		
	
	Quandles are algebraic structures based on Reidemeister moves in knot theory. They were introduced independently by Joyce~\cite{Joyce} and Matveev~\cite{Matveev}.
	They have been used in the construction of invariants for knots (see, for example,~\cite{Elhamdadi_Nelson}) and knotted surfaces (see~\cite{Carter}), as well as in the formulation of set-theoretic solutions to the Yang-Baxter equation (see~\cite{Bardakov_Elhamdadi_Singh, Carter_Elhamdadi_Saito}).
	The binary operations of quandles generalize the notion of conjugation relations. 
    Specifically, the Wirtinger presentation of the fundamental group of a link complement involves conjugation relations of the form $z=y^{-1}xy$. Also, replacing conjugation by a binary operation gives a presentation of the link quandle.
    Every group gives a quandle structure defined by the binary operation: $x * y = y^{-1}xy$, called a {\em conjugation quandle}.
	
	This article examines the residual finiteness of conjugation quandles. 
    An algebraic structure $\mathcal{A}$ is called {\em residually finite} if for all $x, y \in \mathcal{A}$ with $x \neq y$, there is a finite algebraic structure $\mathcal{P}$ and a homomorphism $\psi : \mathcal{A} \rightarrow \mathcal{P}$ such that $\psi(x) \neq \psi(y)$.
	The residual finiteness of groups provides an important connection between topology and group theory. A group $G$ is residually finite if and only if the profinite topology on $G$ is Hausdorff.
	All finitely generated 3-manifold groups, including link groups,
	have been shown to be residually finite (see~\cite{Hempel}).
	Similar to the residual finiteness of groups, this property can also be applied to quandles. It was first introduced in~\cite{Bardakov_Singh_Singh}, where the authors proved that conjugation quandles are residually finite if their underlying groups are residually finite.
	In particular, they subsequently showed that link quandles are residually finite (see~\cite{Bardakov_Singh_Singh_2}). 
    The residual finiteness allows passage from infinite link quandles to finite quandles, thus making computations of invariants arising from coloring of links simpler.
	
	Exploring the residual finiteness of conjugation quandles builds a bridge between geometric group theory and quandle theory. In geometric group theory, there is a challenging open problem posed by Gromov~\cite{Gromov}, concerning whether every hyperbolic group is residually finite. Since conjugation quandles are residually finite if their underlying groups are residually finite (see~\cite{Bardakov_Singh_Singh}), the existence of a non-residually finite conjugation quandle with a hyperbolic underlying group would imply the existence of a non-residually finite hyperbolic group.
	Moreover, a common strategy is to use a less restrictive property to approach Gromov's question, specifically by utilizing the Hopf property. An algebraic structure is called {\em Hopfian} if every surjective endomorphism of the structure is an automorphism.
	Mal'cev~\cite{Malcev} proved that every finitely generated residually finite group is Hopfian.
	Since hyperbolic groups are finitely generated, researchers have examined Gromov's question through the lens of the Hopf property. This perspective raised the question of whether there exist non-Hopfian hyperbolic groups. Recently, Reinfeldt and Weidmann~\cite{Reinfeldt_Weidmann} proved that all hyperbolic groups are Hopfian. Subsequently, Fujiwara and Sela~\cite{Fujiwara_Sela} provided an alternative proof, confirming this result. Although non-Hopfian hyperbolic groups do not exist, the Hopf property of quandles, which remains an area of limited exploration, still offers a potential approach to addressing Gromov's question.
	In a way analogous to Mal'cev's result, it is shown in~\cite{Bardakov_Singh_Singh} that every finitely generated residually finite quandle is Hopfian. Moreover, it is established in~\cite{Bardakov_Singh_Singh} that conjugation quandles are residually finite if their underlying groups are residually finite, and the residual finiteness is closed under taking subquandles.
    Thus, one direct approach to Gromov's question is that: if all hyperbolic groups are residually finite, then every finitely generated subquandle of a conjugation quandle with a hyperbolic underlying group is Hopfian.
	
	In this article, we prove that if a conjugation quandle is Hopfian, then its underlying group is also Hopfian in Lemma~\ref{thm:Hopfian_conjugation_quandle}.
    We also show that the converse does not hold in Theorem~\ref{thm:non_Hopfian_Conj(G)}. The latter result highlights a distinction between conjugation quandles and their underlying groups. Although a recent result~\cite{Reinfeldt_Weidmann, Fujiwara_Sela} shows that every hyperbolic group is Hopfian, our Theorem~\ref{thm:non_Hopfian_Conj(G)} shows that conjugation quandles of hyperbolic groups can be non-Hopfian.
    Famous examples dealing with the residual finiteness and the Hopf property are the Baumslag-Solitar groups. Hence, to analyze the connection between quandles and groups through the residual finiteness and the Hopf property, we examine conjugation quandles of the Baumslag–Solitar groups. Lemma~\ref{thm:Hopfian_conjugation_quandle} gives particular cases of the conjugation quandles of the Baumslag-Solitar groups which are non-Hopfian. However, it is difficult to use the fact that every finitely generated residually finite quandle is Hopfian since we show that conjugation quandles of the Baumslag–Solitar groups are infinitely generated in Theorem~\ref{thm:infinitely_generated}. Hence, to connect the residual finiteness to the Hopf property of quandles, it is necessary to work with finitely generated quandles. For this purpose, we employ Dehn quandles as subquandles of conjugation quandles, which allows us to fully characterize the residual finiteness of conjugation quandles of the Baumslag–Solitar groups in Theorem~\ref{thm:Main_Theorem}.
	
	\section{Basics of Quandles} \label{sec:preliminaries}
	
	In this section, we recall the necessary definitions and known facts of quandles.
	
	\begin{definition}[Quandles]
		\label{def:quandles}
		A non-empty set $Q$ equipped with a binary operation $* : Q \times Q \rightarrow Q$ is called a {\em quandle} if it satisfies the following three conditions:
		\begin{enumerate}[\rm (i)]
			\item $x * x = x$ for all $x \in Q$.
			\item For $x, y \in Q$, there exists a unique $z \in Q$ such that
			$x = z * y$.
			\item $(x * y) * z = (x * z) * (y * z)$ for all $x, y, z \in Q$. 			
		\end{enumerate}			
	\end{definition}
	
	Any set $X$ with a binary operation satisfying $x * y=x$, for all $x, y \in X$, is a quandle called a {\em trivial quandle}. Notice that the three conditions of a quandle $(Q, *)$ directly come from the Reidemeister moves of classical knot theory. 
	In axiom (ii) of Definition~\ref{def:quandles}, the element $z \in Q$ such that $x = z * y$ can be expressed by $x *^{-1} y$. 
	Clearly, the {\em inverse operation} $*^{-1}$ also satisfies (i)-(iii), and thus the set $Q$ equipped with the binary operation $*^{-1} : Q \times Q \rightarrow Q$ is a quandle as well. 
	
	In the remainder of the article, we will use 
	\[
	x_1 *^{\epsilon_1} x_2 *^{\epsilon_2} \cdots *^{\epsilon_{k-1}} x_k
	\]
	to denote 
	\[
	( \cdots ((x_1*^{\epsilon_1}x_2)*^{\epsilon_2}x_3)* \cdots)*^{\epsilon_{k-1}} x_k
	\]
	in a quandle, where $\epsilon_i=\pm 1$ for each $i=1, \dots, k-1$.
	
	Quandles can be derived from groups. Especially, every group can be associated with a conjugation quandle as follows.
	
	\begin{definition}[Conjugation quandles]
	Given a group $G$, the set $G$ equipped with the binary operation $x * y = y^{-1}xy$ which gives a quandle structure on $G$ is called the {\em conjugation quandle} of $G$, and is denoted by $\Conj(G)$.		
	\end{definition}
	
	Note that even though the underlying groups of conjugation quandles are finitely generated, this does not necessarily mean that the conjugation quandles themselves are finitely generated. 
	We will discuss this further in Section~\ref{sec:Conjugation_quandles}.
	\begin{definition}[Dehn quandles]
		Let $G$ be a group, $X$ a nonempty subset of $G$ and $X^G$ the set of all conjugates of elements of $X$ in $G$.
		We call {\em Dehn quandle} of $G$ with respect to $X$, denoted by $\mathcal{D}(X^G)$, the set $X^G$ equipped with the binary operation $x * y = y^{-1}xy$ which gives a quandle structure on $\mathcal{D}(X^G)$.	
	\end{definition}

Clearly, for each $X \subseteq G$, Dehn quandle $\mathcal{D}(X^G)$ is a subquandle of $\Conj(G)$. Also, every subquandle of $\Conj(G)$ is a Dehn quandle (see~\cite[Corollary~3.9]{Dhanwani_1}). Moreover, if a group $G$ is generated by $X$, then $\mathcal{D}(X^G)$ is generated as a quandle by $X$ (see~\cite[Proposition~3.2]{Dhanwani_1}).
Note that a Dehn quandle is finitely generated when the subset $X$ of the group $G$ is a finite set.
	
\section{Connection between conjugation quandles and their underlying groups through the Hopf property}
\label{sec:Conjugation_quandles}

Conjugation quandles and their underlying groups are distinct algebraic structures that nonetheless share several similarities.
There are relations between the residual finiteness and the Hopf property for both groups and quandles. Mal'cev~\cite{Malcev} proved that every finitely generated residually finite group is Hopfian. In~\cite{Bardakov_Singh_Singh}, it was shown that every finitely generated residually finite quandle is Hopfian, and if $G$ is residually finite, then $\Conj(G)$ is residually finite. 
In this section, we show that if $\Conj(G)$ is Hopfian then $G$ is Hopfian. We also prove that the converse does not hold.
We start by stating the following lemma which will also be used in Section 4.

\begin{lemma}
\label{thm:Hopfian_conjugation_quandle}
Let $G$ be a group. If the quandle $\Conj(G)$ is Hopfian, then the group $G$ is Hopfian.		
\end{lemma}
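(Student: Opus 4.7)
The plan is to exploit the functoriality of the construction $G \mapsto \Conj(G)$: any group homomorphism $\varphi : G \to G$ automatically gives a quandle homomorphism $\Conj(\varphi) : \Conj(G) \to \Conj(G)$ with the same underlying set map, because
\[
\varphi(y^{-1}xy) = \varphi(y)^{-1}\varphi(x)\varphi(y)
\]
respects exactly the quandle operation $x*y = y^{-1}xy$. So I would start from a surjective group endomorphism $\varphi$ of $G$, aiming to show it is injective, and use $\Conj(\varphi)$ as a conduit.

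The key observation is that surjectivity and injectivity are purely set-theoretic properties, and $\varphi$ and $\Conj(\varphi)$ agree as set maps. Therefore, if $\varphi$ is surjective as a group homomorphism, then $\Conj(\varphi)$ is surjective as a quandle homomorphism. By hypothesis, $\Conj(G)$ is Hopfian, so $\Conj(\varphi)$ is an automorphism of the quandle $\Conj(G)$, and in particular a bijection of sets. Hence $\varphi$ is a bijection of sets, and being a group homomorphism, it is a group automorphism. This shows $G$ is Hopfian.

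There is no genuine obstacle here; the content of the lemma is essentially the functoriality of $\Conj$ together with the fact that Hopfianness is a statement about set-theoretic bijectivity of surjective endomorphisms. The only point requiring a brief verification is the routine identity $\varphi(y^{-1}xy)=\varphi(y)^{-1}\varphi(x)\varphi(y)$, which ensures $\Conj$ is a functor from groups to quandles. Once that is recorded, the argument is a one-line transfer of the Hopf property along this functor.
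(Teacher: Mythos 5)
Your argument is correct and is essentially the paper's proof: both rely on the observation that a surjective group endomorphism of $G$ induces a surjective quandle endomorphism of $\mathrm{Conj}(G)$ with the same underlying set map, so bijectivity transfers back. The paper merely phrases this as a proof by contradiction (assuming $G$ non-Hopfian), whereas you argue directly; the content is identical.
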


\begin{proof}
	Suppose in contrast that $G$ is non-Hopfian.
	Then there exists an epimorphism $\psi : G \rightarrow G$ such that $\psi$ is not injective, which implies that $\psi$ induces the quandle epimorphism $\tilde{\psi} : \Conj(G) \rightarrow \Conj(G)$ which is not injective.
	Hence $\Conj(G)$ is non-Hopfian, which is a contradiction.
	Thus, $G$ is Hopfian.		
\end{proof}
It is natural to ask whether the converse of Lemma~\ref{thm:Hopfian_conjugation_quandle} holds. In other words, is there a Hopfian conjugation quandle $\Conj(G)$ such that its underlying group $G$ is non-Hopfian?
The following theorem gives an answer to this question:

\begin{theorem}
\label{thm:non_Hopfian_Conj(G)}
Let $G=\langle a \rangle$ be an infinite cyclic group generated by $\{a\}$.
Then $\Conj(G)$ is non-Hopfian, whereas $G$ is Hopfian.	
\end{theorem}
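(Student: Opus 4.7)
The key observation is that $G = \langle a \rangle \cong \ZZ$ is abelian, so for all $x, y \in G$ we have $x * y = y^{-1} x y = x$. Hence $\Conj(G)$ is the trivial quandle on the underlying set $G$. My plan is to exploit this triviality to build a non-injective surjective endomorphism of $\Conj(G)$, while recalling the classical fact that $\ZZ$ itself is Hopfian.

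First I would verify the two reductions explicitly. For $\Conj(G)$, since the operation is $x * y = x$, a map $\varphi : \Conj(G) \to \Conj(G)$ satisfies $\varphi(x * y) = \varphi(x) = \varphi(x) * \varphi(y)$ automatically, so \emph{every} set-theoretic self-map of $G$ is a quandle endomorphism of $\Conj(G)$. For the group side, any group epimorphism $\psi : \ZZ \to \ZZ$ must send a generator to a generator, hence to $\pm a$, and is therefore an automorphism; so $G$ is Hopfian.

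Next I would exhibit a surjective non-injective set map $\varphi : \ZZ \to \ZZ$. A convenient choice is
\[
\varphi(n) = \begin{cases} n & \text{if } n \le 0, \\ n - 1 & \text{if } n \ge 1, \end{cases}
\]
which is visibly surjective (every integer has a preimage) and not injective (both $0$ and $1$ map to $0$). By the previous paragraph $\varphi$ is a quandle epimorphism $\Conj(G) \to \Conj(G)$ that is not injective, so $\Conj(G)$ is non-Hopfian.

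There is no real obstacle here; the main point is simply to recognize that in the abelian case the conjugation quandle collapses to a trivial quandle, which has a vastly larger endomorphism monoid (all set maps) than the group it came from. The argument therefore also suggests, more generally, that $\Conj(G)$ is non-Hopfian whenever $G$ is an infinite abelian group, which is the source of the asymmetry with Lemma~\ref{thm:Hopfian_conjugation_quandle}.
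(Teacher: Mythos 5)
Your proposal is correct and follows essentially the same route as the paper: observe that $\ZZ$ is abelian so $\Conj(G)$ is the trivial quandle, hence every set map is a quandle endomorphism, and then exhibit a surjective non-injective shift-type map. The only (harmless) difference is that you also spell out why $\ZZ$ is Hopfian, which the paper takes as standard.
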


\begin{proof}    
To show that $\Conj(G)$ is non-Hopfian, let $\phi : \Conj(G) \rightarrow \Conj(G)$ be a quandle map defined by $1 \mapsto 1$, $a^n \mapsto a^{n-1}$ and $a^{-n} \mapsto a^{-n+1}$ for every $n \in \mathbb{Z}_+$.
Since the group $G$ is abelian, $\Conj(G)$ is a trivial quandle. Hence, any map from $\Conj(G)$ to itself is a homomorphism. Hence, $\phi$ is also a homomorphism. Clearly, $\phi$ is surjective.
But since $\phi(1)=1=\phi(a)$ with $a \neq 1$ in $\Conj(G)$, $\phi$ is not injective.
Thus, $\Conj(G)$ is non-Hopfian. 	
\end{proof}

Theorem~\ref{thm:non_Hopfian_Conj(G)} highlights a distinction between conjugation quandles and their underlying groups. Recall that every hyperbolic group is Hopfian~\cite{Fujiwara_Sela, Reinfeldt_Weidmann}.
However, since every cyclic group is hyperbolic, Theorem~\ref{thm:non_Hopfian_Conj(G)} gives an example of a non-Hopfian conjugation quandle whose underlying group is hyperbolic.
Moreover, since every trivial quandle is residually finite, Theorem~\ref{thm:non_Hopfian_Conj(G)} implies that there exists an infinitely generated quandle which is non-Hopfian but residually finite.

\section{The Conjugation Quandles of the Baumslag-Solitar Groups}

In this section, we examine conjugation quandles of the Baumslag–Solitar groups in order to highlight their similarities and to discuss some differences.
One of the most famous examples of a finitely presented non-Hopfian group is the Baumslag-Solitar group $\BS(2,3)=\langle a, b \svert b^{-1}a^2b=a^3 \rangle$. By Lemma~\ref{thm:Hopfian_conjugation_quandle}, the conjugation quandle $\Conj(\BS(2,3))$ is non-Hopfian.
In general, the Baumslag–Solitar groups $\BS(m,n)=\langle a, b \svert b^{-1}a^mb=a^n \rangle$, where $m, n \in \mathbb{Z} \setminus \{0\}$, provide a particularly useful case study, as their Hopfian property and residual finiteness are completely classified in terms of their defining relations.
The classification of the residual finiteness and the Hopf property for 
$\BS(m,n)$ is as follows, based on~\cite{Baumslag_Solitar} and corrected by Meskin~\cite{Meskin}:

\begin{itemize}
	\item $\BS(m,n)$ is residually finite if and only if $|m|=1$ or $|n|=1$ or $|m|=|n|$.
	\item $\BS(m,n)$ is Hopfian if and only if it is residually finite or $\pi(m)=\pi(n)$, where $\pi(m)$ and $\pi(n)$ denote the set of all primes dividing $m$ and $n$, respectively.	
\end{itemize}

Using Lemma~\ref{thm:Hopfian_conjugation_quandle}, one obtains that the conjugation quandle $\Conj(\BS(m,n))$ is non-Hopfian if $|m| \neq 1 \neq |n|$ and $\pi(m) \neq \pi(n)$, where $\pi(m)$ and $\pi(n)$ denote the set of all primes dividing $m$ and $n$, respectively.

The following is a complete classification of the residual finiteness for conjugation quandles of the Baumslag-Solitar groups. A detailed proof of the following theorem will be provided in the latter part of this section.
	
	\begin{theorem}\label{thm:Main_Theorem}
		Let $\BS(m,n)$ be the Baumslag-Solitar group given by the presentation
		\[
		\BS(m,n)=\langle a, b \svert b^{-1}a^mb=a^n \rangle,
		\]
		where $m, n \in \mathbb{Z} \setminus \{0\}$.
		Then the conjugation quandle $\Conj(\BS(m,n))$ is residually finite if and only if $|m|=1$, $|n|=1$ or $|m|=|n|$.
	\end{theorem}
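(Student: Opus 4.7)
The plan is as follows. The forward direction is immediate: if $|m|=1$, $|n|=1$, or $|m|=|n|$, then Meskin's theorem~\cite{Meskin} gives that $\BS(m,n)$ is residually finite, and the Bardakov--Singh--Singh result cited in Section~\ref{sec:introduction} then yields that $\Conj(\BS(m,n))$ is residually finite. So the real content is in the converse, which I would prove by contradiction.

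Assume $|m|,|n|\ge 2$ and $|m|\ne|n|$, so that $\BS(m,n)$ itself is not residually finite. Suppose for contradiction that $\Conj(\BS(m,n))$ is residually finite. Since $\BS(m,n)=\langle a,b\rangle$, the Dehn subquandle $\mathcal{D}(\{a,b\}^{\BS(m,n)})$ is generated as a quandle by $\{a,b\}$, hence is finitely generated. Residual finiteness is inherited by subquandles (\cite{Bardakov_Singh_Singh}), so this Dehn quandle would also be residually finite, and then the Mal'cev-type theorem of \cite{Bardakov_Singh_Singh} forces it to be Hopfian. The core task is therefore to show that $\mathcal{D}(\{a,b\}^{\BS(m,n)})$ is \emph{not} Hopfian.

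To that end, I would construct a surjective, non-injective quandle endomorphism $\phi$ of $\mathcal{D}(\{a,b\}^{\BS(m,n)})$ by assigning $\phi(a)$ and $\phi(b)$ to carefully chosen conjugates already lying in the Dehn quandle and then extending along the quandle structure. The defining relation $b^{-1}a^m b=a^n$ becomes the quandle identity $a^m * b = a^n$, and iterating this under the inner action of $b$ produces identifications among iterated conjugates of $a$ which, under the hypothesis $|m|\ne|n|$, should supply enough redundancy to collapse two distinct elements to a common image. In the non-Hopfian Baumslag--Solitar cases (e.g. $\BS(2,3)$), a first attempt is to descend the known group endomorphism $a\mapsto a^k$, $b\mapsto b$; the obstacle is that $a^k$ is generally not a conjugate of either $a$ or $b$, so this naive map does not restrict to the Dehn quandle and must be modified. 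In the Hopfian but not residually finite cases (e.g. $\BS(2,4)$), no group-level endomorphism is available at all, so $\phi$ must be built genuinely at the quandle level, exploiting the weaker requirement that quandle homomorphisms preserve only conjugation and not group multiplication.

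The main obstacle will be this construction and the verification that the resulting $\phi$ is well-defined, surjective, and not injective uniformly across all cases $|m|,|n|\ge 2$ with $|m|\ne|n|$. Well-definedness is the subtlest point, since it amounts to checking $\phi$ against every identification among iterated conjugates of $a$ and $b$ produced by the relation $b^{-1}a^m b=a^n$; I expect Britton's lemma for the HNN extension structure of $\BS(m,n)$ to be the key tool for reducing this consistency check to a tractable combinatorial verification, after which surjectivity and the exhibition of a nontrivial kernel pair should follow.
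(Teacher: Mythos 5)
Your forward direction and your overall architecture for the converse are exactly the paper's: pass to a finitely generated Dehn subquandle (finitely generated because the defining subset of the group is finite, residually finite if $\Conj(\BS(m,n))$ is, since residual finiteness is subquandle-closed), invoke the Mal'cev-type theorem of~\cite{Bardakov_Singh_Singh}, and derive a contradiction by exhibiting a surjective non-injective quandle endomorphism. The problem is that the entire content of the converse lives in that last step, and you explicitly leave it as an ``obstacle'' rather than carrying it out. As it stands there is no endomorphism, no verification of surjectivity, and no witness to non-injectivity, so the proof has a genuine gap precisely where the difficulty is.

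Two more concrete points of divergence. First, your choice of Dehn quandle $\mathcal{D}(\{a,b\}^{\BS(m,n)})$ is likely the wrong one: the paper works with $\mathcal{D}(\{a,b,a^m\}^{\BS(m,n)})$, and the extra generator $a^m$ is essential to both of its constructions --- when $\pi(m)\neq\pi(n)$ the endomorphism swaps $a$ and $a^m$ while fixing $b$, and when $m\mid n$ it sends $a^m\mapsto a^m*b$ while fixing $a$ and $b$. Since $a^m$ is in general not conjugate to $a$ or $b$, it does not lie in your quandle, and (as you yourself observe when discarding $a\mapsto a^k$) it is unclear that any surjective non-injective endomorphism of $\mathcal{D}(\{a,b\}^{\BS(m,n)})$ exists; you would have to either enlarge the generating set as the paper does or produce a genuinely new construction. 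Second, your plan is pitched as uniform over all $|m|,|n|\geq 2$ with $|m|\neq|n|$, but the paper needs a three-way case split: the case $\pi(m)=\pi(n)$ with neither $m\mid n$ nor $n\mid m$ (e.g.\ $\BS(4,6)$) is not handled by a direct construction at all, but by passing to the subgroup $\BS(m/k,n/k)$ with $k=\gcd(m,n)$ and using that residual finiteness of conjugation quandles is inherited by conjugation quandles of subgroups. Your proposal gives no mechanism for this case. On the positive side, your use of Britton's Lemma as the tool for certifying non-injectivity is exactly what the paper does.
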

	
	Every finitely generated residually finite quandle is Hopfian (see~\cite[Theorem~5.7]{Bardakov_Singh_Singh}). However, the fact that conjugation quandles are non-Hopfian does not necessarily imply that they are always non-residually finite.
The fact that the underlying groups of conjugation quandles are finitely generated does not guarantee that the conjugation quandles themselves are finitely generated (see Theorem~\ref{thm:non_Hopfian_Conj(G)}, for example). 
We prove that the conjugation quandles of the Baumslag-Solitar groups are also infinitely generated.
	
	\begin{theorem}
		\label{thm:infinitely_generated}
		Let $\BS(m,n)$ be the Baumslag-Solitar group given by the presentation
		\[
		\BS(m,n)=\langle a, b \svert b^{-1}a^mb=a^n \rangle,
		\]
		where $m, n \in \mathbb{Z} \setminus \{0\}$.
		Then the conjugation quandle $\Conj(\BS(m,n))$ is infinitely generated.	
	\end{theorem}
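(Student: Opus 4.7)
The plan is to use the following elementary observation about conjugation quandles: in $\Conj(G)$, the binary operations $x * y = y^{-1}xy$ and $x *^{-1} y = yxy^{-1}$ both preserve the $G$-conjugacy class of the first argument. Hence, if $\langle S \rangle_q$ denotes the subquandle of $\Conj(G)$ generated by a set $S \subseteq G$, then every element of $\langle S \rangle_q$ is a $G$-conjugate of some element of $S$. Formally, a straightforward induction on the length of a quandle word $s_1 *^{\epsilon_1} s_2 *^{\epsilon_2} \cdots *^{\epsilon_{k-1}} s_k$ shows that such a word equals $g^{-1}s_1 g$ for some $g \in G$ depending on $s_2,\dots,s_k$ and the signs. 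Therefore $\langle S \rangle_q \subseteq \bigcup_{s \in S} [s]_G$, where $[s]_G$ denotes the conjugacy class of $s$ in $G$.

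From this, the contrapositive of the theorem takes the following form: if $\Conj(G)$ is finitely generated as a quandle, then $G$ is a union of finitely many conjugacy classes. It therefore suffices to exhibit infinitely many conjugacy classes in $\BS(m,n)$.

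For this, I would pass to the abelianization. From the presentation $\BS(m,n) = \langle a,b \svert b^{-1}a^m b = a^n \rangle$, the defining relation becomes $a^{m-n} = 1$ in $\BS(m,n)^{\mathrm{ab}}$, so
\[
\BS(m,n)^{\mathrm{ab}} \;\cong\; \mathbb{Z}/(m-n) \oplus \mathbb{Z},
\]
(with the convention that $\mathbb{Z}/0 = \mathbb{Z}$ when $m=n$). In particular, the images of $\{b^k\}_{k \in \mathbb{Z}}$ are pairwise distinct in the $\mathbb{Z}$-factor. Since elements that become distinct in the abelianization cannot be conjugate in $G$, the powers $b^k$ lie in pairwise distinct conjugacy classes of $\BS(m,n)$. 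Combining this with the observation from the first paragraph gives a contradiction if $\Conj(\BS(m,n))$ were finitely generated, proving the theorem.

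The main content of the argument is really the opening observation; the rest is bookkeeping. The one point that deserves care is verifying that $\langle S \rangle_q \subseteq \bigcup_{s \in S} [s]_G$ for iterated quandle operations (and their inverses), since a priori one can plug arbitrary already-generated elements into either slot of $*$; however, because only the second slot contributes to conjugation and because conjugation by any element of $G$ stays within the same $G$-conjugacy class of the first argument, the containment propagates through all finite quandle words. No cleverness about the structure of $\BS(m,n)$ beyond its abelianization is needed.
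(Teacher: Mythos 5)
Your proposal is correct and follows essentially the same route as the paper: both arguments use the abelianization of $\BS(m,n)$ to show that the powers $b^k$ lie in pairwise distinct conjugacy classes, and then conclude that no finite generating set can suffice. The only difference is that you spell out explicitly the (correct) observation that a subquandle of $\Conj(G)$ generated by $S$ is contained in the union of the $G$-conjugacy classes of elements of $S$, a step the paper leaves implicit.
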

	
	\begin{proof}
It is well-known that two conjugate elements in a group have the same image through the abelianization. 
Consider the abelianization $\BS(m,n)_{ab}$ of $\BS(m,n)$.
Then $gb^kg^{-1} \neq b^{\ell}$ in $\BS(m,n)_{ab}$ for each $k \neq \ell$, and hence $b^k$ and $b^\ell$ are not conjugate if $k \neq \ell$.
Therefore, the number of elements in the generating set of $\Conj(\BS(m,n))$ is not bounded,
this completes the proof of Theorem~\ref{thm:infinitely_generated}.	\end{proof}
	
	Now we give the proof of Theorem~\ref{thm:Main_Theorem}.
	
	\begin{proof}[Proof of Theorem~\ref{thm:Main_Theorem}]
		Suppose that $|m|=1$, $|n|=1$ or $|m|=|n|$.
		Then, since $\BS(m,n)$ with $|m|=1$, $|n|=1$ or $|m|=|n|$ is residually finite, by \cite[Proposition~4.1]{Bardakov_Singh_Singh}, $\Conj(\BS(m,n))$ is residually finite.	
		
		Now, suppose that $1 \neq |m| \neq |n| \neq 1$. We categorize this assumption into three cases.
		In each case, we will derive that $\Conj(\BS(m,n))$ is non-residually finite.
		In this proof, let $\pi(m)$ and $\pi(n)$ denote the set of all primes dividing $m$ and $n$, respectively.
		
		\medskip
		\noindent {\bf Case 1.} {\em $\pi(m) \neq \pi(n)$.}
		
		\medskip
		Let $\mathcal{D}$ be the Dehn quandle of $\BS(m,n)$ with respect to $\{a, b, a^m\}$.
		We will show that $\mathcal{D}$ is non-residually finite, which implies that $\Conj(\BS(m,n))$ is also non-residually finite  due to the fact that the residual finiteness is subquandle-closed.
		To be specific, we will prove that $\mathcal{D}$ is finitely generated and non-Hopfian, thereby proving that $\mathcal{D}$ is non-residually finite. We will first establish the following claim.
		
		\medskip
		\noindent {\bf Claim.} {\em The quandle $\mathcal{D}$ is finitely generated.}
		
		\begin{proof}[Proof of Claim]
			Let $w \in \mathcal{D}$ be an arbitrary element. Then $w=g^{-1}xg$ for some $x \in \{a, b, a^m\}$ and $g \in \BS(m,n)$. If $g=1$ in $\BS(m,n)$, then clearly $w=x \in \{a, b, a^m \}$. Assume that $g \neq 1$ in $\BS(m,n)$. Since $g \in \BS(m,n)$, we may write $g \equiv x_1^{\epsilon_1} \cdots x_k^{\epsilon_k}$ such that $x_i \in \{a, b\}$ and $\epsilon_i=\pm 1$ for each $i=1, \dots, k$.
			Hence $w$ can be written as 
			$x_k^{-\epsilon_k} \cdots x_1^{-\epsilon_1} x\; x_1^{\epsilon_1} \cdots x_k^{\epsilon_k}$, where $x \in \{a, b, a^m\}$, $x_i \in \{a, b\}$, and $\epsilon_i=\pm 1$ for each $i=1, \dots, k$.
			In other words, $w$ can be expressed as either
			\[
			a *^{\epsilon_1} x_1 *^{\epsilon_2} \cdots *^{\epsilon_k} x_k, \quad b *^{\epsilon_1} x_1 *^{\epsilon_2} \cdots *^{\epsilon_k} x_k, \quad \text{or} \quad a^m *^{\epsilon_1} x_1 *^{\epsilon_2} \cdots *^{\epsilon_k} x_k,
			\]
			where $x_i \in \{a, b\}$ and $\epsilon_i=\pm 1$ for each $i=1, \dots, k$.
			Thus, $\mathcal{D}$ is generated by $\{a, b, a^m\}$.		
		\end{proof}
		
		Now we verify that $\mathcal{D}$ is non-Hopfian.
		Let $\phi$ be the unique quandle homomorphism from a free quandle on $\{a, b, a^m\}$ to $\mathcal{D}$ induced by the mapping
		\[
		a \mapsto a^m, \quad b \mapsto b \quad \text{and} \quad a^m \mapsto a.
		\]
		Then $\phi$ induces an endomorphism $\tilde{\phi}$ of $\mathcal{D}$ and clearly, $\tilde{\phi}$ is surjective. However, $\tilde{\phi}$ is not injective since
		$\tilde{\phi}(a * b * a *^{-1} b)=\tilde{\phi}(a)$ but $a * b * a *^{-1} b \neq a$ in
		$\mathcal{D}$ since $ba^{-1}b^{-1}abab^{-1} \neq a$ in $\BS(m,n)$ by Britton's Lemma. 
		Therefore, $\mathcal{D}$ is non-residually finite, and thus, for Case~1,
		$\Conj(\BS(m,n))$ is non-residually finite. 
		
		\medskip
		\noindent {\bf Case 2.} {\em Either $m \mid n$ or $n \mid m$, and $\pi(m)=\pi(n)$.}
		
		\medskip
		Without loss of generality,  we may assume that $n=\ell m$ for some $\ell \in \mathbb{Z} \setminus \{0, \pm 1\}$.
		Let $\mathcal{D}$ be the Dehn quandle of $\BS(m,n)$ with respect to $\{a, b, a^m\}$.
		We will demonstrate that $\mathcal{D}$ is non-residually finite by applying a similar argument to that used in Case~1.
		In analogy with the conclusion drawn in Case~1, it follows that $\mathcal{D}$ is finitely generated.
		
		Now we prove that $\mathcal{D}$ is non-Hopfian.
		Let $\phi$ be the unique homomorphism from a free quandle on $\{a, b, a^m\}$ to $\mathcal{D}$ induced by the mapping
		\[
		a \mapsto a, \quad b \mapsto b \quad \text{and} \quad a^m \mapsto a^m * b.
		\]
		So $\phi$ induces an endomorphism $\tilde{\phi}$ of $\mathcal{D}$.
		Clearly, $\tilde{\phi}$ is surjective. However $\tilde{\phi}$ is not injective since
		\[
		\tilde{\phi}(a^m *^{-1} b * a)=\tilde{\phi}(a^m *^{-1} b)
		\]
		but $a^m *^{-1} b * a \neq a^m *^{-1} b$ in $\mathcal{D}$, since $a^{-1}ba^mb^{-1}a \neq ba^{-m}b^{-1}$ in $\BS(m,n)$ by Britton's Lemma. Thus, for Case 2, $\Conj(\BS(m,n))$ is non-residually finite.
		
		\medskip
		\noindent {\bf Case 3.} {\em Neither $m \mid n$ nor $n \mid m$, while $\pi(m)=\pi(n)$.}
		
		\medskip
		In this case, since $\pi(m)=\pi(n)$, there exists $k=\gcd(m,n)$ with $k \neq 1$.
		Then
		\[
		\pi\left(\frac{m}{k}\right) \neq \pi\left(\frac{n}{k}\right) \quad \text{and} \quad \left|\frac{m}{k}\right| \neq 1 \neq \left|\frac{n}{k}\right|.
		\]
		Hence, by the conclusion of Case~1, $\Conj(\BS(\frac{m}{k}, \frac{n}{k}))$ is non-residually finite. Since $\BS(\frac{m}{k}, \frac{n}{k})$ is a subgroup of $\BS(m,n)$,
		and since the residual finiteness is subgroup-closed, 
		it follows that $\Conj(\BS(m,n))$ is non-residually finite for Case~3. This end the proof of Theorem~\ref{thm:Main_Theorem}.			
	\end{proof}
	
	
	By Theorem~\ref{thm:infinitely_generated}, the conjugation quandle $\Conj(\BS(m,n))$ is always infinitely generated. Thus, it is not possible to observe the Hopf property using Theorem~\ref{thm:Main_Theorem} based on the fact that finitely generated residually finite quandles are Hopfian (see~\cite{Bardakov_Singh_Singh}).
	In this regard, we leave the following question.
	
	\begin{question}
	If $m,n \in \mathbb{Z} \setminus \{0\}$ with $|m|=1$, $|n|=1$, or $|m|=|n|$, is the conjugation quandle $\Conj(\BS(m,n))$ Hopfian?	
	\end{question}
	
	Finally, in order to achieve a complete classification of both the residual finiteness and the Hopf property of conjugation quandles of the Baumslag-Solitar groups, we pose the following question.
	
	\begin{question}
		If $m,n \in \mathbb{Z} \setminus \{0\}$ with $\pi(m)=\pi(n)$, where $\pi(m)$ and $\pi(n)$ denote the set of all primes dividing $m$ and $n$, respectively,
		then is the conjugation quandle $\Conj(\BS(m,n))$ Hopfian?	
	\end{question}
	
	\section*{Acknowledgement} 
	Mohamed Elhamdadi was partially supported by Simons Foundation collaboration grant 712462.
    Jan Kim was supported by Basic Science Research Program through the National Research Foundation of Korea(NRF) funded by the Ministry of Education (2021R1A6A1A10039823).


\begin{thebibliography}{3}
		
		
		
		
		
		
		

         \bibitem{Bardakov_Elhamdadi_Singh}
			V.\ G.\ Bardakov, M.\ Elhamdadi and M.\ Singh, {\em Yang-Baxter Equation and Related Algebraic Structures},
				arXiv:2506.23175, 338 pages, 2025.	
		
		
		\bibitem{Bardakov_Singh_Singh}
		V.\ G.\ Bardakov, M.\ Singh and M.\ Singh, {\em Free quandles and knot quandles are residually finite}, Proc. Amer. Math. Soc. {\bf 147}(8) (2019), 3621--3633.
		
		\bibitem{Bardakov_Singh_Singh_2}
		V.\ G.\ Bardakov, M.\ Singh and M.\ Singh, {\em Link quandles are residually finite}, Monatsh. Math. {\bf 191} (2020), 679--690.

        \bibitem{Baumslag_Solitar}
		G.\ Baumslag and D.\ Solitar, {\em Some two-generator one-relator non-Hopfian groups},
		Bull. Amer. Math. Soc. {\bf 68} (1962), 199--201.
		
		
		\bibitem{Carter}
		J.\ S.\ Carter, D.\ Jelsovsky, S.\ Kamada, L.\ Langford and M.\ Saito, {\em Quandle cohomology and state-sum invariants of knotted curves and surfaces}, Trans. Amer. Math. Soc. {\bf 355}(10) (2003), 3947–-3989.
		
		\bibitem{Carter_Elhamdadi_Saito}
		J. S. Carter, M. Elhamdadi and M. Saito, {\em Homology theory for the set-theoretic Yang-Baxter equation and knot invariants from generalizations of quandles}, Fund. Math. {\bf 184} (2004), 31–-54.
		
		\bibitem{Dhanwani_1}
		N.\ K.\ Dhanwani, H.\ Raundal and M.\ Singh, {\em Dehn quandles of groups and orientable surfaces}, Fund. Math. {\bf 263}(2) (2023), 167--201.
		
		
		
		
		
		\bibitem{Elhamdadi_Nelson}
		M.\ Elhamdadi, S.\ Nelson {\em Quandles: An Introduction to the Algebra of Knots}, AMS 2015.
		
		
		\bibitem{Fujiwara_Sela}
		K.\ Fujiwara and Z.\ Sela, {\em The rates of growth in a hyperbolic group}, Invent. math. {\bf 233}(3) (2023), 1--44.
		
		
		\bibitem{Gromov}
		M.\ Gromov, {\em Hyperbolic groups}, Essays in group theory, MSRI Series
		Vol. {\bf 8}, (S.\ M.\ Gersten, ed.), Springer (1987), 75--263.
		
		\bibitem{Hempel}
		J.\ Hempel, {\em Residual finiteness for 3-manifolds}, Combinatorial group theory and topology (Alta, Utah, 1984), {\bf 111} (1987), 379--396.
		
		
		
		
		\bibitem{Joyce}
		D.\ Joyce, {\em A classifying invariant of knots, the knot quandle}, J. Pure Appl. Algebra {\bf 23}(1) (1982), 37--65.
		
		
		
		
		\bibitem{Malcev}
		A.\ I.\ Mal'cev, {\em On the faithful representation of infinite groups by matrices}, Mat. Sb. {\bf 8}(50) (1940), 405--422.
		
		\bibitem{Matveev}
		S.\ Matveev, {\em Distributive groupoids in knot theory}, Mat. Sb. {\bf 161}(1) (1982), 78--88.
		
		
		\bibitem{Meskin}
		S.\ Meskin, {\em Nonresidually finite one-relator groups}, Trans. Amer. Math. Soc. {\bf 164} (1972), 105--114.
		
		
		
		
		\bibitem{Reinfeldt_Weidmann}
		C.\ Reinfeldt and R.\ Weidmann,
		{\em Makanin-Razborov diagrams for hyperbolic groups}, Ann. Math. Blaise Pascal {\bf 26} (2019), 119--208.
		
		
		
	\end{thebibliography}
\end{document}